 \newtheorem{Theorem}{Theorem}[section]
 \newtheorem{Corollary}[Theorem]{Corollary}
 \newtheorem{Lemma}[Theorem]{Lemma}
 \newtheorem{Proposition}[Theorem]{Proposition}
 \newtheorem{Remark}[Theorem]{Remark}
 \numberwithin{equation}{section}
\begin{document}

\title
 {Effectiveness of strong openness property in $L^p$}

\author{Qi'an Guan, Zheng Yuan}
\address{Qi'an Guan: School of Mathematical Sciences,
Peking University, Beijing, 100871, China. }
\email{guanqian@math.pku.edu.cn}

\address{Zheng Yuan: School of Mathematical Sciences,
Peking University, Beijing, 100871, China.}
\email{zyuan@pku.edu.cn}

\thanks{}

\subjclass[2010]{32D15 32E10 32L10 32U05 32W05}

\keywords{effectiveness, concavity, strong openness property, multiplier ideal sheaf,
plurisubharmonic function}

\date{\today}

\dedicatory{}

\commby{}


\begin{abstract}
In this article, we obtain an effectiveness result of strong openness property in $L^p$ with some applications.
\end{abstract}

\maketitle

\section{Introduction}

The concept of multiplier ideal sheaf helps to translate $L^2$ estimates into algebraic conditions,
which plays an important role and was widely discussed in several complex variables, complex geometry and algebraic geometry
(see e.g. \cite{tian87,Nadel90,siu96,siu00,demailly-note2000,DEL00,D-K01,D-P03,siu05,siu09,demailly2010}).

Recall the definition of the multiplier ideal sheaf $\mathcal{I}(\varphi)$ (see \cite{demailly-note2000,demailly2010}):
a germ of holomorphic function $(f,z)\in\mathcal{I}(\varphi)_{z}$ if and only if $|f|^{2}e^{-\varphi}$ is integrable near $z$,
where $\varphi$ is a plurisubharmonic function on a complex manifold $X$.

The strong openness property for multiplier ideal sheaves i.e. $\mathcal{I}(\varphi)=\mathcal{I}_{+}(\varphi):=\cup_{q>1}\mathcal{I}(q\varphi)$, was conjectured by Demailly (see \cite{demailly-note2000,demailly2010}) and proved by Guan-Zhou \cite{GZopen-c}
(see also \cite{Hiep14,Lempert14}, 2-dimensional case was proved by Jonsson-Must\c{a}t\u{a} \cite{JM12}).

There is an important special case of the strong openness property, which was called the openness property i.e. if $\mathcal{I}(\varphi)=\mathcal{O}$, then $\mathcal{I}(\varphi)=\mathcal{I}_{+}(\varphi)$.
The openness property was conjectured by Demailly-Koll\'{a}r \cite{D-K01} and proved by Berndtsson \cite{berndtsson13} (2-dimensional case was proved by Favre-Jonsson \cite{FM05j}).

We would like to recall that Berndtsson's proof of the openness property established the effectiveness result of the openness property \cite{berndtsson13}.
Stimulated by the effectiveness in Berndtsson's proof of the openness property and continuing Guan-Zhou's proof of the strong openness property,
Guan-Zhou \cite{GZopen-effect} established the effectiveness result of the strong openness property.

Let $D\subset\mathbb{C}^n$ be a pseudoconvex domain containing the origin $o$.
Let $F$ be a holomorphic function on $D$ and let $\varphi$ be a negative plurisubharmonic function on $D$. Recall a notation in \cite{GZopen-effect} that
$$K_{\varphi,F}(o):=\frac{1}{\inf\{\int_{D}|\tilde F|^2:(\tilde F-F,o)\in\mathcal{I}_+(2c_{o}^{F}(\varphi)\varphi)_o\,\&\,\tilde F\in\mathcal{O}(D)\}},$$
where $c_o^{F}(\varphi):=\sup\{c\geq0:|F|^2e^{-2c\varphi}$ is $L^1$ on a neighborhood of $o$$\}$ is the jumping number (see \cite{JM13}).
Let $$\theta(q):=(\frac{1}{(q-1)(2q-1)})^{\frac{1}{q}},$$
where $q\in(1,+\infty)$.
\begin{Theorem}\label{gz-eff}
	\cite{GZopen-effect} Let $C_1$ and $C_2$ be two positive constants. We consider the set of the pairs $(F,\varphi)$ satisfying
	
	$(1)$ $\int_{D}|F|^2e^{-\varphi}\leq C_1$;
	
	$(2)$ $(K_{\varphi,F})^{-1}(o)\geq C_2$.
	
	Then for any $q>1$ satisfying $$\theta(q)>\frac{C_1}{C_2},$$ we have $$(F,o)\in\mathcal{I}(q\varphi)_o.$$
\end{Theorem}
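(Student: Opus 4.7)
The plan is to argue by contradiction: assuming $(F,o)\notin\mathcal{I}(q\varphi)_{o}$, which by the already-established strong openness property of Guan--Zhou is equivalent to $c:=c_{o}^{F}(\varphi)\le q/2$, I will derive the quantitative inequality $\theta(q)\le C_{1}/C_{2}$, contradicting the hypothesis. The central object is the family of minimal $L^{2}$ integrals
$$G(t):=\inf\left\{\int_{\{2c\varphi<-t\}}|\tilde F|^{2}\,:\,\tilde F\in\mathcal{O}(\{2c\varphi<-t\}),\ (\tilde F-F,o)\in\mathcal{I}_{+}(2c\varphi)_{o}\right\},$$
indexed by $t\ge 0$, which by hypothesis $(2)$ satisfies $G(0)=(K_{\varphi,F})^{-1}(o)\ge C_{2}$.

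The first technical step is to establish a concavity property for $G$: after the change of variable $r=e^{-t}$, the function $r\mapsto G(-\log r)$ should be concave on $(0,1]$. This is of the same character as the concavity used by Guan--Zhou in the original proof of strong openness and its effectiveness, and is natural to prove by applying an optimal Ohsawa--Takegoshi-type $L^{2}$ extension on each sublevel set $\{2c\varphi<-t\}$, with weights arranged so that a near-minimiser for $G(t_{0})$ extends to a competitor for $G(t_{1})$ at a nearby value, producing the required convex-combination inequality with no loss of constant. Together with $G(0)\ge C_{2}$ and the boundary behaviour of $G(t)$ as $t\to+\infty$ (tightly controlled by the fact that $c$ is the jumping number, so that $|F|^{2}e^{-2c\varphi}$ is borderline non-integrable at $o$), this concavity yields an explicit lower envelope $G(t)\ge C_{2}\,\rho(t)$ for a known profile $\rho$.

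The second step converts hypothesis $(1)$ into an integral upper bound for $G$ via the layer-cake representation of $e^{-\varphi}$, rewritten in terms of levels of $2c\varphi$ using the reduction $c\le q/2$, yielding $\int_{0}^{\infty}G(t)\,d\mu_{q}(t)\le C_{1}$ for an explicit measure $\mu_{q}$ depending on $q$. Substituting the lower envelope of the previous step and computing the resulting elementary integral should produce exactly the factor $\theta(q)=((q-1)(2q-1))^{-1/q}$, giving $C_{2}\,\theta(q)\le C_{1}$ and the desired contradiction. I expect the main obstacle to be establishing the concavity of $G$ with \emph{sharp} constants, since any loss there would degrade the exponent in $\theta(q)$; the algebraic shape $(q-1)(2q-1)$ and the exponent $1/q$ strongly suggest an optimal H\"older split with conjugate exponents $(q,q/(q-1))$ applied after rewriting $e^{-\varphi}=e^{-\varphi/q}\cdot e^{-(1-1/q)\varphi}$, and arranging this split so that the concavity-based estimate for $G$ matches it exactly is the delicate point on which the sharpness of $\theta(q)$ ultimately rests.
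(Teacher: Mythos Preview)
Theorem~\ref{gz-eff} is quoted from \cite{GZopen-effect}; the present paper does not reprove it directly but instead establishes the stronger Theorem~\ref{c:lp}, whose case $p=2$, $a=1$ (Corollary~\ref{c:l2}) is the sharp effectiveness bound $\theta(q)=q/(q-1)$ of \cite{guan_sharp}, and then simply observes that $q/(q-1)>\bigl((q-1)(2q-1)\bigr)^{-1/q}$ for all $q>1$.

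Your architecture---concavity of the minimal $L^{2}$ integrals $G(t)$ plus a layer-cake decomposition of $\int_{D}|F|^{2}e^{-\varphi}$---is exactly the paper's route to that sharp bound (Lemma~\ref{l:concave} and Lemma~\ref{l:1} combined into Proposition~\ref{p}). The misconception lies in your final paragraph: no H\"older split is needed, and the computation does not produce the factor $\bigl((q-1)(2q-1)\bigr)^{-1/q}$. Concavity of $r\mapsto G(-\log r)$ on $(0,1]$ together with $G\ge 0$ already forces $G(t)\ge e^{-t}G(0)\ge e^{-t}C_{2}$; no separate analysis of the boundary behaviour as $t\to\infty$ is required. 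Inserting this and the trivial bound $\int_{\{\varphi<-t\}}|F|^{2}\ge G(2ct)$ into the layer-cake identity $\int_{D}|F|^{2}e^{-\varphi}=\int_{-\infty}^{\infty}\bigl(\int_{\{\varphi<-t\}}|F|^{2}\bigr)e^{t}\,dt$ gives directly
\[
C_{1}\ \ge\ \Bigl(\int_{0}^{\infty}e^{(1-2c)t}\,dt+\int_{-\infty}^{0}e^{t}\,dt\Bigr)C_{2}\ =\ \frac{2c}{2c-1}\,C_{2}\ \ge\ \frac{q}{q-1}\,C_{2},
\]
the last step because $c\le q/2$ and $x\mapsto x/(x-1)$ is decreasing. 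Thus your own scheme, carried through, already yields the \emph{sharp} constant $q/(q-1)$, strictly stronger than Theorem~\ref{gz-eff}. The H\"older split you propose is the mechanism behind the original non-sharp bound in \cite{GZopen-effect}, not the concavity method used here, and invoking it would only degrade the conclusion.
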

When $F\equiv1$, Theorem \ref{gz-eff} implies Berndtsson's effectiveness result of the openness property (\cite{berndtsson13}, see also \cite{GZopen-effect}).

By considering the minimal
$L^2$ integrals related to a multiplier ideal sheaf $\mathcal{I}(\varphi)$ on the sublevel sets of the weight $\varphi$, Guan \cite{guan_sharp} obtained a sharp version of Theorem \ref{gz-eff},
i.e.
$$\theta(q)=\frac{q}{q-1},$$
and presented a concavity property of the minimal $L^{2}$ integrals.
After that, Guan \cite{guan_general concave} (see also \cite{GMconcave}) generalized the above concavity property.

In \cite{Fo15}, Forn{\ae}ss established the following strong openness property in $L^p$:

\emph{Let $F$ be a holomorphic function on a domain $D\subset\mathbb{C}^n$ containing the origin $o$, $\varphi$ a plurisubharmonic function on $D$ and $p\in(0,+\infty)$. If $|F|^pe^{-\varphi}$ is $L^1$ on a neighborhood of $o$, then there exists $q>1$ such that $|F|^pe^{-q\varphi}$ is $L^1$ on a neighborhood of $o$.}

In this article, we obtain an effectiveness result of the strong openness property in $L^p$ by using the general concavity in \cite{guan_general concave}.

\subsection{Main result}

Let $D\subset\mathbb{C}^n$ be a pseudoconvex domain containing the origin $o$.
Let $F$ be a holomorphic function on $D$,
and let $\varphi$ be a negative plurisubharmonic function on $D$. Let $p\in(0,+\infty)$, and denote that $F_1=F^{\lceil\frac{p}{2}\rceil}$, $\varphi_1=(2\lceil\frac{p}{2}\rceil-p)\log|F|$, where $\lceil m\rceil:=\min\{n\in \mathbb{Z}:n\geq m\}$. Then $\int_{D}|F|^pe^{-\varphi}=\int_{D}|F_1|^2e^{-(\varphi_1+\varphi)}$.

Take $c_{o,p}^{F}(\varphi):=\sup\{c\geq0:|F|^pe^{-2c\varphi}$ is $L^1$ on a neighborhood of $o$$\}$. Especially, when $p=2$, $c_{o,p}^{F}(\varphi)$  degenerates to the jumping number $c_o^{F}(\varphi)$. If $c_{o,p}^{F}(\varphi)<+\infty$, we generalize   $K_{\varphi,F}$ as follows:
$$K^{(p)}_{\varphi,F,a}(o):=\frac{1}{\inf\{\int_D|\tilde F|^2e^{-\varphi_1-(1-a)\varphi
}:(\tilde F-F_1,o)\in \mathcal{I}(\varphi_1+2c_{o,p}^{F}(\varphi)\varphi)_o \, \& \,\tilde F\in\mathcal{O}(D)\}},$$
where $a\in(0,+\infty)$.
Especially, when $p=2$ and $a=1$, $K^{(p)}_{\varphi,F,a}(o)$ degenerates to $K_{\varphi,F}$. Note that $K^{(p)}_{\varphi,F,a}(o)<+\infty$ (see Appendix).

We obtain an  effectiveness result of the strong openness property in $L^{p}$.
\begin{Theorem}
	\label{c:lp}
	Let $C_1$ and $C_2$ be two positive constants. If there exists $a>0$, such that
	
	$(1)$ $\int_{D}|F|^pe^{-\varphi}\leq C_1$;
	
	$(2)$ $(K_{\varphi,F,a}^{(p)})^{-1}(o)\geq C_2$.
	
	Then for any $q>1$ satisfying $$\theta_a(q)>\frac{C_1}{C_2},$$ we have
$|F|^pe^{-q\varphi}$ is $L^1$ on a neighborhood of $o$, where $\theta_a(q)=\frac{q+a-1}{q-1}$.	
\end{Theorem}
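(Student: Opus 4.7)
The plan is to reduce the $L^p$ statement to an $L^2$ minimal-integral problem using the substitutions $F_1=F^{\lceil p/2\rceil}$ and $\varphi_1=(2\lceil p/2\rceil - p)\log|F|$ introduced before the statement, which convert the target integrability $|F|^p e^{-q\varphi}\in L^1_{\mathrm{loc}}(o)$ into $(F_1,o)\in\mathcal{I}(\varphi_1+q\varphi)_o$, and then to apply the general concavity of minimal $L^2$ integrals from \cite{guan_general concave} in parallel with Guan's sharp version of Theorem~\ref{gz-eff}.

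I would argue by contradiction, assuming $|F|^p e^{-q\varphi}\notin L^1_{\mathrm{loc}}(o)$; equivalently, $c_0:=c^F_{o,p}(\varphi)<q/2$ (the case $c_0=+\infty$ is immediate), and Forn\ae{}ss' strong openness in $L^p$ \cite{Fo15} then forces $(F_1,o)\notin\mathcal{I}(\varphi_1+2c_0\varphi)_o$. Introduce the minimal $L^2$ function
\begin{equation*}
G(t):=\inf\Bigl\{\int_{\{-\varphi>t\}}|\tilde F|^2 e^{-\varphi_1-(1-a)\varphi}:\tilde F\in\mathcal{O}(D),\ (\tilde F-F_1,o)\in\mathcal{I}(\varphi_1+2c_0\varphi)_o\Bigr\},
\end{equation*}
so that $G(0)=(K^{(p)}_{\varphi,F,a})^{-1}(o)\geq C_2$ by Hypothesis~(2), and $G(t)\to 0$ as $t\to+\infty$ by monotone decrease and integrability of the admissible integrand on $D$. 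A layer-cake decomposition of $e^{-a\varphi}$ (using $\varphi\leq 0$) applied to the admissible competitor $\tilde F=F_1$ gives
\begin{equation*}
\int_D|F|^p e^{-\varphi}=G_{F_1}(0)+a\int_0^{+\infty}e^{at}G_{F_1}(t)\,dt\geq G(0)+a\int_0^{+\infty}e^{at}G(t)\,dt,
\end{equation*}
where $G_{F_1}(t)$ denotes the integrand of $G$ at $\tilde F=F_1$; combined with Hypothesis~(1) this yields the upper bound $G(0)+a\int_0^{+\infty}e^{at}G(t)\,dt\leq C_1$.

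The crux is to invoke the general concavity of \cite{guan_general concave} with density $c(s)=e^{-(a+q-1)s}$, which asserts that $\tilde G(r):=G(h^{-1}(r))$ is concave in the capacity variable $h(t)=\int_t^{+\infty}c(s)\,ds=e^{-(a+q-1)t}/(a+q-1)$. Together with $\tilde G(0^+)=G(+\infty)=0$ and $\tilde G(h(0))=G(0)\geq C_2$, concavity (secant-line lower bound) yields the pointwise inequality $G(t)\geq G(0)\cdot e^{-(a+q-1)t}$. Integration against the layer-cake density then produces
\begin{equation*}
a\int_0^{+\infty}e^{at}G(t)\,dt\geq G(0)\cdot a\int_0^{+\infty}e^{-(q-1)t}\,dt=\frac{a}{q-1}G(0),
\end{equation*}
so $G(0)+a\int_0^{+\infty}e^{at}G(t)\,dt\geq\bigl(1+\tfrac{a}{q-1}\bigr)G(0)=\theta_a(q)\,G(0)$. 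Combined with the upper bound $\leq C_1$, this gives $\theta_a(q)\,G(0)\leq C_1$, hence $\theta_a(q)\leq C_1/G(0)\leq C_1/C_2$, contradicting $\theta_a(q)>C_1/C_2$.

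The main obstacle lies in justifying the concavity step: one must verify that the general concavity of \cite{guan_general concave} applies to our $G$ with the density $c(s)=e^{-(a+q-1)s}$ (which depends on both the parameter $a$ of the hypothesis and the target exponent $q$). The case $p=2$, $a=1$ reduces to Guan's sharp version of Theorem~\ref{gz-eff} with density $e^{-qs}$ and $\theta(q)=q/(q-1)$; in the general case, the shift from reference weight $\varphi$ to $(1-a)\varphi$ in the definition of $K^{(p)}_{\varphi,F,a}(o)$ is absorbed by shifting the exponent of the density by $a-1$, which in turn produces the numerator $q+a-1$ in $\theta_a(q)=(q+a-1)/(q-1)$. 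Once the concavity is in hand, the remaining arithmetic is the standard scheme carried out above.
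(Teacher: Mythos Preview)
Your approach is essentially the paper's own: both combine the layer--cake identity (Lemma~\ref{l:1}) with the general concavity of minimal $L^2$ integrals (Lemma~\ref{l:concave}) to bound $\int_D|F|^pe^{-\varphi}$ from below by $\theta_a(q)\,(K^{(p)}_{\varphi,F,a})^{-1}(o)$. The paper packages the argument into two general Propositions (\ref{p} and \ref{thm:effective}) in the variables $(\varphi',\psi)$ and then specializes via $F\mapsto F_1$, $\psi\mapsto a\varphi$, $\varphi'\mapsto\varphi_1+\varphi$, $q=(q'-1)a+1$; you specialize from the outset, but the substance is the same.

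Two points worth tightening. First, your ``density $c(s)=e^{-(a+q-1)s}$'' does not fit Lemma~\ref{l:concave} directly for your $G$: matching the ideal $\mathcal I(\varphi_1+2c_0\varphi)_o$ forces $\tilde\varphi+\tilde\psi=\varphi_1+2c_0\varphi$, so the decay rate that concavity actually produces is $2c_0+a-1$, i.e.\ $G(t)\ge G(0)\,e^{-(2c_0+a-1)t}$; your weaker bound with exponent $a+q-1$ then follows from the contradiction hypothesis $q\ge 2c_0$. Second, condition~(3) of $\mathcal P_0$ (a positive lower bound for $e^{-\tilde\varphi}c(-\tilde\psi)=e^{-\varphi_1-(1-a)\varphi}$ on compacts) can fail when $a>1$, since $e^{-(1-a)\varphi}\to 0$ on the polar set of $\varphi$. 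The paper circumvents exactly this by the approximation $c_l$ in the proof of Proposition~\ref{p} (taking $c_l\equiv 1$ on $(0,l)$ and letting $l\to\infty$); you would need the same device to make your concavity step rigorous.
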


	Let $D=\Delta$, $F=z$ and $\varphi=\frac{p+2}{q}\log|z|$, then $c_{o,p}^{F}(\varphi)=\frac{q}{2}$. By some calculations, we have $\int_{D}|F|^{p}e^{-\varphi}=\frac{2q\pi}{(q-1)(p+2)}$ and $K_{\varphi,F,a}^{(p)}(o)=\frac{(q+a-1)(p+2)}{2q\pi}$, then $K_{\varphi,F,a}^{(p)}(o)\int_{D}|F|^pe^{-\varphi}=\frac{q+a-1}{q-1}$, which implies the sharpness of Theorem \ref{c:lp}.

When $F\equiv1$, $K_{D,(1-a)\varphi}(o)\geq K_{\varphi,F,a}^{(p)}(o)$, where $K_{D,(1-a)\varphi}$ is the Bergman kernel with weight $e^{-(1-a)\varphi}$ on $D$. Note that $K_{D,(1-a)\varphi}(o)<+\infty$ (see Appendix). Theorem \ref{c:lp} implies the following effectiveness result of the openness property.
\begin{Corollary}
	\label{c:o-effect}
	Let $C_1$ and $C_2$ be two positive constants.  If there exists $a>0$, such that
	
	$(1)$ $\int_{D}e^{-\varphi}\leq C_1$;
	
	$(2)$ $(K_{D,(1-a)\varphi})^{-1}(o)\geq C_2$.
	
	Then for any $q>1$ satisfying\begin{equation}
		\label{eq:210704d}\theta_a(q)>\frac{C_1}{C_2},	\end{equation}
		we have
$e^{-q\varphi}$ is $L^1$ on a neighborhood of $o$, where $\theta_a(s)=\frac{q+a-1}{q-1}$.	
\end{Corollary}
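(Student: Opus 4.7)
The plan is to deduce Corollary \ref{c:o-effect} as an immediate specialization of Theorem \ref{c:lp} to the case $F\equiv 1$. First I would observe that for $F\equiv 1$ and any $p>0$ one has $F_{1}=1$ and $\varphi_{1}\equiv 0$, so the weighted integral in hypothesis (1) of Theorem \ref{c:lp} is exactly $\int_{D}e^{-\varphi}\leq C_{1}$, which is hypothesis (1) of the corollary.

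Next I would invoke the comparison $K_{D,(1-a)\varphi}(o)\geq K^{(p)}_{\varphi,F,a}(o)$ noted in the paragraph just before the corollary: taking reciprocals gives $(K^{(p)}_{\varphi,F,a})^{-1}(o)\geq (K_{D,(1-a)\varphi})^{-1}(o)\geq C_{2}$, so hypothesis (2) of Theorem \ref{c:lp} is implied by hypothesis (2) of the corollary. For completeness I would briefly verify the cited inequality by unwinding the definitions: $(K_{D,(1-a)\varphi})^{-1}(o)$ is the infimum of $\int_{D}|\tilde F|^{2}e^{-(1-a)\varphi}$ over $\tilde F\in\mathcal{O}(D)$ with $\tilde F(o)=1$, while with $F_{1}=1$ and $\varphi_{1}=0$ the quantity $(K^{(p)}_{\varphi,F,a})^{-1}(o)$ is the infimum of the same integral over the subclass of $\tilde F$ satisfying $(\tilde F-1,o)\in\mathcal{I}(2c^{F}_{o,p}(\varphi)\varphi)_{o}$; since $1$ lies outside this multiplier ideal (by the definition of the jumping number together with strong openness), any such $\tilde F$ necessarily has $\tilde F(o)=1$, and restricting to a smaller admissible class can only enlarge the infimum.

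With both hypotheses of Theorem \ref{c:lp} verified and $\theta_{a}(q)=\frac{q+a-1}{q-1}$ shared by the two statements, an application of the theorem to $F\equiv 1$ yields that $|F|^{p}e^{-q\varphi}=e^{-q\varphi}$ is $L^{1}$ on a neighborhood of $o$ whenever $\theta_{a}(q)>C_{1}/C_{2}$, which is exactly the conclusion of the corollary. The only slightly delicate step in the reduction is the implication $(\tilde F-1,o)\in\mathcal{I}(2c^{F}_{o,p}(\varphi)\varphi)_{o}\Rightarrow \tilde F(o)=1$ underlying the comparison of the two Bergman-type kernels; the case $c^{F}_{o,p}(\varphi)=+\infty$ corresponds to $\varphi$ being locally bounded near $o$, in which $e^{-q\varphi}$ is trivially $L^{1}$ for all $q>0$ and requires no separate argument.
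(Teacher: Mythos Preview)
Your proposal is correct and follows exactly the route the paper indicates: the paper derives Corollary~\ref{c:o-effect} as an immediate specialization of Theorem~\ref{c:lp} to $F\equiv 1$, invoking the comparison $K_{D,(1-a)\varphi}(o)\geq K^{(p)}_{\varphi,F,a}(o)$ stated just before the corollary. You have additionally supplied the justification of that kernel comparison (via the inclusion of admissible classes and the fact that $1\notin\mathcal{I}(2c^{F}_{o,p}(\varphi)\varphi)_{o}$ by strong openness) and disposed of the trivial case $c^{F}_{o,p}(\varphi)=+\infty$, details the paper leaves implicit.
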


 When $D=\Delta$ and $\varphi=\frac{2}{q}\log|z|$. By some calculations, we have $\int_{D}e^{-\varphi}=\frac{q\pi}{q-1}$ and $K_{D,(1-a)\varphi}(o)=\frac{q+a-1}{q\pi}$, then $K_{D,(1-a)\varphi}(o)\int_{D}e^{-\varphi}=\frac{q+a-1}{q-1}$, which implies the sharpness of Corollary \ref{c:o-effect}.

\subsection{Applications: more precise versions of some known results}

 In this section, using Theorem \ref{c:lp} and Corollary \ref{c:o-effect}, we give more precise versions of some known effectiveness results of the strong openness property in $L^2$ and the openness property.

When $p=2$,  Theorem \ref{c:lp} is the following effectiveness result of the strong openness property in $L^{2}$.

\begin{Corollary}
\label{c:l2}
Let $C_1$ and $C_2$ be two positive constants. If there exists $a>0$, such that
	
	$(1)$ $\int_{D}|F|^2e^{-\varphi}\leq C_1$;
	
	$(2)$ $(K_{\varphi,F,a}^{(2)})^{-1}(o)\geq C_2$.
	
	Then for any $q>1$ satisfying
	\begin{equation}
		\label{eq:210704b}\theta_a(q)>\frac{C_1}{C_2},
	\end{equation}
	 we have
$|F|^2e^{-q\varphi}$ is $L^1$ on a neighborhood of $o$, where $\theta_a(s)=\frac{q+a-1}{q-1}$.	
\end{Corollary}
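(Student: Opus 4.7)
The plan is simply to invoke Theorem \ref{c:lp} at $p=2$ and to check that every piece of auxiliary data in that theorem's hypotheses collapses to the quantities appearing in Corollary \ref{c:l2}. The first step is to compute $F_1$ and $\varphi_1$ when $p=2$: since $\lceil p/2\rceil=\lceil 1\rceil=1$, one has $F_1=F^{1}=F$ and $\varphi_1=(2\cdot 1-2)\log|F|\equiv 0$. Consequently $\int_{D}|F|^{p}e^{-\varphi}=\int_{D}|F_1|^{2}e^{-(\varphi_1+\varphi)}$ reduces to $\int_{D}|F|^{2}e^{-\varphi}$, and the generalized jumping number $c_{o,2}^{F}(\varphi)$ coincides with the usual jumping number $c_{o}^{F}(\varphi)$.

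With these identifications, the quantity defining $K^{(p)}_{\varphi,F,a}(o)$ specializes, at $p=2$, to
\[
K^{(2)}_{\varphi,F,a}(o)=\frac{1}{\inf\{\int_{D}|\tilde F|^{2}e^{-(1-a)\varphi}:(\tilde F-F,o)\in\mathcal{I}(2c_{o}^{F}(\varphi)\varphi)_{o}\,\&\,\tilde F\in\mathcal{O}(D)\}},
\]
so hypotheses $(1)$ and $(2)$ of Theorem \ref{c:lp} become verbatim hypotheses $(1)$ and $(2)$ of Corollary \ref{c:l2}. The conclusion of Theorem \ref{c:lp} for $q>1$ with $\theta_{a}(q)>C_{1}/C_{2}$ is that $|F|^{p}e^{-q\varphi}$ is $L^{1}$ in a neighborhood of $o$, which for $p=2$ is precisely the desired assertion $|F|^{2}e^{-q\varphi}\in L^{1}$ near $o$.

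There is no genuine obstacle here: Corollary \ref{c:l2} is a direct specialization of the main theorem. The only verification worth flagging is that the $p=2$ case produces no degeneracy in the definition of $K^{(p)}_{\varphi,F,a}(o)$, which is automatic since $\varphi_1\equiv 0$ makes the weight $e^{-\varphi_1-(1-a)\varphi}$ and the ideal sheaf $\mathcal{I}(\varphi_1+2c_{o,p}^{F}(\varphi)\varphi)_{o}$ collapse to the simpler $e^{-(1-a)\varphi}$ and $\mathcal{I}(2c_{o}^{F}(\varphi)\varphi)_{o}$ used in the corollary.
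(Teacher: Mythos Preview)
Your proposal is correct and is exactly the paper's approach: the paper simply states that Corollary \ref{c:l2} is the case $p=2$ of Theorem \ref{c:lp}, and your verification that $\lceil p/2\rceil=1$, $F_1=F$, $\varphi_1\equiv 0$ makes all hypotheses and conclusions match is precisely the content of that specialization.
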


Note that $K_{\varphi,F,1}^{(2)}=K_{\varphi,F}$. When $a=1$, Corollary \ref{c:l2} is the sharp version of Theorem \ref{gz-eff} in \cite{guan_sharp}:

\emph{Let $C_1$ and $C_2$ be two positive constants. We consider the set of the pairs $(F,\varphi)$ satisfying: $\int_{D}|F|^2e^{-\varphi}\leq C_1$ and $(K_{\varphi,F})^{-1}(o)\geq C_2$.
Then for any $q>1$ satisfying
	\begin{equation}
		\label{eq:20210602b}\theta(q)>\frac{C_1}{C_2},
	\end{equation}
	 we have
$|F|^2e^{-q\varphi}$ is $L^1$ on a neighborhood of $o$, where $\theta(q)=\frac{q}{q-1}$.}

It is clear that $\frac{q}{q-1}>(\frac{1}{(q-1)(2q-1)})^{\frac{1}{q}}$ for any $q>1$, then Corollary \ref{c:l2} implies Theorem \ref{gz-eff}.
The following remark shows that Corollary \ref{c:l2} is more precise than the sharp version of Theorem \ref{gz-eff} in \cite{guan_sharp}.

\begin{Remark}
Let $D$ be $\Delta^2\in\mathbb{C}^2$. Let $F=z_1+z_2$ and $\varphi=\log|z_1|$, then $c_o^{F}(\varphi)=1$. By some calculations, we have $\int_{D}|F|^2e^{-\varphi}=\frac{5}{3}\pi^2$, and $K_{\varphi,F,a}^{(2)}(o)=\frac{a+1}{\pi^2}$.
Then inequality \eqref{eq:210704b} implies $q<\frac{8}{5}$, and
inequality \eqref{eq:20210602b} implies $q<\frac{10}{7}$.
\end{Remark}

When $F\equiv1$, the sharp version of Theorem \ref{gz-eff} in \cite{guan_sharp} implies the following result (\cite{guan_sharp}):

 \emph{Let $C_1$ and $C_2$ be two positive constants. We consider the set of $\varphi$ satisfying:
	 $\int_{D}e^{-\varphi}\leq C_1$ and $(K_{D})^{-1}(o)\geq C_2$.
	Then for any $q>1$ satisfying
	\begin{equation}
		\label{eq:210704c}\theta(q)>\frac{C_1}{C_2},
	\end{equation}
	 we have
$e^{-q\varphi}$ is $L^1$ on a neighborhood of $o$, where $\theta(q)=\frac{q}{q-1}$ and $K_D$ is the Bergman kernel on $D$.}

The above result is the case $a=1$ of Corollary \ref{c:o-effect}.

It is known that Guan-Zhou's effectiveness result of strong openness property (\cite{GZopen-effect}) implies Berndtsson's effectiveness result of the openness property (\cite{berndtsson13}, see also \cite{GZopen-effect}).
Note that $\frac{q}{q-1}>(\frac{1}{(q-1)(2q-1)})^{\frac{1}{q}}$ for any $q>1$, then Corollary \ref{c:o-effect} is a general version of Berndtsson's effectiveness result of the openness property.

The following remark shows that Corollary \ref{c:o-effect} is more precise than the above result (\cite{guan_sharp}) with condition \eqref{eq:210704c}.
\begin{Remark}
Let $D$ be $\Delta^2\in\mathbb{C}^2$, and let $\varphi=\log|z_1|+\log|z_2|$. By some calculations, we have $\int_{D}e^{-\varphi}=4\pi^2$, and $K_{D,(1-a)\varphi}(o)=\frac{(a+1)^2}{4\pi^2}$.
Then inequality \eqref{eq:210704d} implies $q<\frac{3}{2}$, and
inequality \eqref{eq:210704c} implies $q<\frac{4}{3}$.
\end{Remark}

\section{Proof of Theorem \ref{c:lp}}

Firstly, we recall two lemmas which will be used in the proof of Propersition \ref{p}.
\begin{Lemma}\cite{guan_sharp}
	\label{l:1}
	Let $F$ be a holomorphic funtion on pseudoconvex domain $D$. Let $\psi$ be a negative plurisubharmonic function on $D$, and let $\varphi'$ be a  plurisubharmonic function on $D$. Assume that $\int_{D}|F|^2e^{-\varphi'}<+\infty$. Then
	\begin{equation}
		\label{eq:20210529b}
		\int_{D}|F|^2e^{-\varphi'}=\int_{-\infty}^{+\infty}(\int_{\{\psi<-t\}}|F|^2e^{-\varphi'+\psi})e^tdt.
	\end{equation}
\end{Lemma}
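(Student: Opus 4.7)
The plan is to prove the identity via a pointwise ``layer-cake'' decomposition of the constant function $1$, followed by an application of Tonelli's theorem. First, for every $z \in D$ with $\psi(z) \in (-\infty, 0]$, I would verify the pointwise identity
\[
1 = \int_{-\infty}^{+\infty} \mathbf{1}_{\{\psi(z) < -t\}}\, e^{\psi(z)+t}\, dt,
\]
by observing that the indicator equals $1$ exactly when $t < -\psi(z)$, so the right-hand side collapses to $e^{\psi(z)} \int_{-\infty}^{-\psi(z)} e^t\, dt = e^{\psi(z)} \cdot e^{-\psi(z)} = 1$. Since $\psi$ is plurisubharmonic (and, implicitly, not identically $-\infty$), the set $\{\psi = -\infty\}$ has Lebesgue measure zero, so this identity holds almost everywhere on $D$.

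Next I would multiply both sides of the pointwise identity by the non-negative function $|F(z)|^2 e^{-\varphi'(z)}$, which lies in $L^1(D)$ by hypothesis, and integrate over $D$ to obtain
\[
\int_D |F|^2 e^{-\varphi'}\, dV = \int_D \int_{-\infty}^{+\infty} \mathbf{1}_{\{\psi(z) < -t\}}\, |F(z)|^2 e^{-\varphi'(z) + \psi(z)}\, e^t\, dt\, dV(z).
\]
Since the integrand is non-negative and jointly measurable in $(z,t)$, Tonelli's theorem permits interchanging the order of integration; after the interchange, the condition $\psi(z) < -t$ sitting inside the indicator simply restricts the inner spatial integral to the sublevel set $\{\psi < -t\}$, producing exactly the right-hand side of \eqref{eq:20210529b}.

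No substantive obstacle arises in this argument: the proof reduces to a single application of Tonelli, with the pointwise identity supplying the weight $e^t\, dt$ on the sublevel sets. The only minor points worth remarking are the joint measurability of $\{\psi(z)<-t\}$ in $(z,t)$ (immediate from upper semicontinuity of $\psi$) and the negligibility of $\{\psi=-\infty\}$, neither of which poses any difficulty.
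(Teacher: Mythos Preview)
Your argument is correct and is essentially the same as the paper's: the paper invokes the layer-cake identity $\int_X f\,d\mu=\int_0^{+\infty}\mu(\{f>l\})\,dl$ with $f=e^{-\psi}$ and $d\mu=|F|^2e^{-\varphi'+\psi}\,dV$, then proves that identity by Tonelli via the indicator $\theta_f(l,x)=\mathbf{1}_{\{f(x)>l\}}$, which after the change of variable $l=e^t$ is exactly your pointwise decomposition $1=\int_{-\infty}^{+\infty}\mathbf{1}_{\{\psi<-t\}}e^{\psi+t}\,dt$. The only cosmetic difference is that you carry out the computation directly rather than passing through the abstract layer-cake formulation.
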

\begin{proof}
	It is clear that the lemma directly follows from the basic formula \cite{GBF}	\begin{equation}
	\label{eq:l62}
		\int_Xfd\mu=\int_0^{+\infty}\mu(\{x\in X:f(x)>l\})dl
	\end{equation}
  for nonnegative measurable function $f:X\rightarrow[0,+\infty)$ with $X=D$, $f=e^{-\psi}$, and $d\mu=|F|^2e^{-\varphi'+\psi}dV_{2n}$, where $dV_{2n}$ is the Lebesgue measure in $\mathbb C^n$.

  Next, we will prove the equality \eqref{eq:l62}.
  \begin{displaymath}
  	\begin{split}
  		\int_Xfd\mu &=\int_X(\int_0^{f(x)}dl)d\mu
  		\\&=\int_X(\int_0^{+\infty} \theta_f(l,x) dl)d\mu
  		\\&=\int_0^{+\infty}(\int_X\theta_f(l,x)d\mu)dl
  		\\&=\int_0^{+\infty}\mu(\{x\in X:f(x)>l\})dl,  		\end{split}
  \end{displaymath}
  where
    $\theta_f(l,x) =
  \left\{ \begin{array}{cc}
  1 & \textrm {if $l<f(x)$}\\
  0 & \textrm{if $l\geq f(x)$}  	
  \end{array} \right.$
  is a function defined on $\mathbb R\times X$. Then the equality \eqref{eq:l62} has thus been completed.	
\end{proof}

We recall some notations and definitions in \cite{guan_general concave}. Let $\tilde\psi<0$ be a plurisubharmonic function on $D$ satisfying $\tilde\psi(o)=-\infty$, and let $\tilde \varphi$ be a Lebesgue measurable function on $D$ such that $\tilde\varphi+\tilde\psi$ is a plurisubharmonic function on $D$. We call a positive smooth function $c$  on $(0,+\infty)$ in class $\mathcal P_0$ if the following three statements hold:

(1) $\int_{0}^{+\infty}c(t)e^{-t}<+\infty$;

(2) $c(t)e^{-t}$ is decreasing with respect to $t$;

(3) for any compact subset $K\subset D$, $e^{-\tilde\varphi}c(-\tilde\psi)$ has a positive lower bound on $K$.

Define a function $G(t;c):[0,+\infty)\rightarrow [0,+\infty]$ ($G(t)$ for short without misunderstanding) by
\begin{equation*}
\inf\{\int_{\{\psi<-t\}}|\tilde{f}|^{2}e^{-\tilde\varphi}c(-\tilde\psi):(\tilde{f}-F,o)\in \mathcal I(\tilde\varphi+\tilde\psi)_o\,\&\,\tilde{f}\in \mathcal O(\{\psi<-t\})\},
\end{equation*}
 where $c\in\mathcal P_0$.

We will use the following concavity of $G(t)$  in the proof of Proposition \ref{p}. Let $h(t)=\int_{t}^{+\infty}c(t_{1})e^{-t_{1}}dt_{1}$.

\begin{Lemma}
\label{l:concave}(\cite{guan_general concave}, see also \cite{GMconcave})
Let $c\in\mathcal P_0$. If $G(0)<+\infty$, then $G(h^{-1}(r))$ is concave with respect to $r\in(0,\int_{0}^{+\infty}c(t)e^{-t}dt]$.	
\end{Lemma}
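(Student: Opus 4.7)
The plan is to reduce the concavity assertion to a single sharp one-parameter $L^{2}$-extension inequality between sublevel sets $\{\psi<-t\}$, and then convert it via the change of variable $r=h(t)$. Writing $g(r):=G(h^{-1}(r))$, concavity on $(0,h(0)]$ is equivalent to the difference quotient $(g(r)-g(r_{0}))/(r-r_{0})$ being non-increasing in $r$. Since $h$ is strictly decreasing with $dh(t)=-c(t)e^{-t}\,dt$, this is in turn equivalent to a precise inequality between $G(t_{1})$ and $G(t_{2})$ for $0\le t_{1}<t_{2}$, weighted by $h(t_{1})-h(t_{2})=\int_{t_{1}}^{t_{2}}c(s)e^{-s}\,ds$.

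First I would settle some basic structure of $G$: using condition (3) in the definition of $\mathcal{P}_{0}$ together with the hypothesis $G(0)<+\infty$, one sees that $G(t)<+\infty$ for all $t\ge 0$, that $G$ is decreasing in $t$ (any admissible $\tilde f$ at level $t_{1}$ restricts to an admissible $\tilde f$ at any $t_{2}>t_{1}$), and that it is lower semi-continuous. A Montel-type normal families argument then ensures that the infimum defining $G(t)$ is realized by some $f_{t}\in\mathcal{O}(\{\psi<-t\})$.

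The technical heart of the proof is the following key extension inequality: for $0\le t_{1}<t_{2}$ and any admissible $\tilde f_{2}\in\mathcal{O}(\{\psi<-t_{2}\})$, construct an admissible extension $\tilde f_{1}\in\mathcal{O}(\{\psi<-t_{1}\})$ with
\begin{equation*}
\int_{\{\psi<-t_{1}\}}|\tilde f_{1}|^{2}e^{-\tilde\varphi}c(-\tilde\psi)\ \le\ \int_{\{\psi<-t_{2}\}}|\tilde f_{2}|^{2}e^{-\tilde\varphi}c(-\tilde\psi)\ +\ \bigl(h(t_{1})-h(t_{2})\bigr)\cdot M,
\end{equation*}
where $M$ reflects the sharp extension constant. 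I would produce $\tilde f_{1}$ by the standard cut-off-and-solve-$\bar\partial$ recipe: (a) pick a smooth increasing truncation $b_{t_{1},t_{2}}(s)$ vanishing for $s\le -t_{2}$ and equal to $1$ for $s\ge -t_{1}$; (b) solve $\bar\partial u=\bar\partial\bigl(b_{t_{1},t_{2}}(\psi)\tilde f_{2}\bigr)$ on $\{\psi<-t_{1}\}$ via H\"ormander's $L^{2}$ estimate with a twisted weight $\tilde\varphi+\eta(-\psi)$ for an auxiliary convex $\eta$ matched to $c$; and (c) set $\tilde f_{1}:=b_{t_{1},t_{2}}(\psi)\tilde f_{2}-u$, which is holomorphic and still satisfies the ideal-sheaf constraint at the origin because $b_{t_{1},t_{2}}(\psi)\equiv 1$ near $o$ while $u$ inherits the required vanishing through the $L^{2}$ bound. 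The decreasing property of $c(t)e^{-t}$ in $\mathcal{P}_{0}$ is precisely what yields the positivity making the weighted H\"ormander estimate saturate the sharp constant.

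Plugging an $\varepsilon$-minimizer of $G(t_{2})$ into the key inequality, taking the infimum over admissible $\tilde f_{1}$'s on the left, and letting $\varepsilon\to 0$, gives the desired relation between $G(t_{1})$, $G(t_{2})$ and $h(t_{1})-h(t_{2})$. Setting $r_{i}=h(t_{i})$ and rearranging then yields the monotonicity of $(g(r)-g(r_{0}))/(r-r_{0})$ that characterizes concavity of $g$. The main obstacle lies in extracting the \emph{sharp} constant in the extension: a generic Ohsawa-Takegoshi estimate would only give concavity up to a multiplicative loss, and the exact constant requires a carefully engineered H\"ormander weight whose feasibility rests precisely on conditions (1)--(3) defining $\mathcal{P}_{0}$, most crucially the monotonicity of $c(t)e^{-t}$.
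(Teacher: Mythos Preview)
The paper does not actually prove this lemma: it is quoted verbatim from \cite{guan_general concave} (see also \cite{GMconcave}) and used as a black box in the proof of Proposition~\ref{p}. There is therefore no in-paper argument to compare your proposal against.

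That said, your outline is essentially the strategy carried out in those references. The core is exactly the sharp $L^{2}$-extension inequality between sublevel sets $\{\tilde\psi<-t_{2}\}\subset\{\tilde\psi<-t_{1}\}$, obtained by a cut-off plus a twisted $\bar\partial$-estimate, with the decreasing condition on $c(t)e^{-t}$ guaranteeing the required curvature positivity; the change of variable $r=h(t)$ then converts this into concavity of $G(h^{-1}(r))$. Two small points worth tightening if you ever write this out in full: (i) the constant $M$ in your key inequality must be identified precisely as $\lim_{t\to+\infty}G(t)/h(t)$ (or equivalently the derivative of $g$ at $0^{+}$), not left as an unspecified ``sharp extension constant'', since concavity needs the exact inequality $G(t_{1})-G(t_{2})\le (h(t_{1})-h(t_{2}))\cdot G(t_{2})/h(t_{2})$ (and its companion from below); and (ii) verifying that the $\bar\partial$-solution $u$ lies in $\mathcal{I}(\tilde\varphi+\tilde\psi)_{o}$ requires more than ``the $L^{2}$ bound'' --- it uses that the twisted weight forces enough decay of $u$ near $o$ relative to $e^{-\tilde\varphi-\tilde\psi}$, which in the cited papers is handled by an explicit choice of the auxiliary function $\eta$.
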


Next, we prove  two propositions. Let $I$ be an ideal in $\mathcal{O}_o$, and take
$$C_{F,I,\Phi_1-\Phi_2}(D):=\inf\{\int_D|\tilde F|^2e^{-\Phi_1+\Phi_2
}:(\tilde F-F,o)\in  I \, \& \,\tilde F\in\mathcal{O}(D)\},$$
where $\Phi_1\not\equiv-\infty$ and $\Phi_2\not\equiv-\infty$ are plurisubharmonic functions on $D$. $C_{F,I,\Phi_1-\Phi_2}(D)=0$ if and only if $(F,o)\in I$ (see Appendix).\begin{Proposition}
\label{p}
	Let $F$ be a holomorphic funtion on pseudoconvex domain $D$. Let  $\psi$  be a negative plurisubharmonic function on $D$ satisfying $\psi(o)=-\infty$,  and let $\varphi'$ be a  plurisubharmonic function on $D$. Then for any $q'>1$, we have
	\begin{equation}
		\label{eq:20210529a}
		\int_{D}|F|^2e^{-\varphi'}\geq\frac{q'}{q'-1}C_{F,\mathcal{I}(\varphi'+(q'-1)\psi)_o,\varphi'-\psi}(D).
	\end{equation}
\end{Proposition}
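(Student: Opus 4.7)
The plan is to combine the layer-cake decomposition of Lemma~\ref{l:1} with the general concavity of Lemma~\ref{l:concave}, choosing the weight in $\mathcal{P}_0$ so that the minimal $L^2$ integral furnished by the concavity agrees with $C_{F,\mathcal{I}(\varphi'+(q'-1)\psi)_o,\varphi'-\psi}(D)$. We may assume $\int_D|F|^2e^{-\varphi'}<+\infty$, since otherwise the inequality is vacuous. Applying Lemma~\ref{l:1} gives
\[
\int_D|F|^2e^{-\varphi'}=\int_{-\infty}^{+\infty}\Big(\int_{\{\psi<-t\}}|F|^2e^{-\varphi'+\psi}\Big)e^t\,dt,
\]
and because $\psi\le 0$ the inner integral is the constant $\int_D|F|^2e^{-\varphi'+\psi}$ for all $t\le 0$, so the $t\le 0$ part of the outer integral contributes exactly this constant.

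Next I would invoke Lemma~\ref{l:concave} with the choices $\tilde\psi=\psi$, $\tilde\varphi=\varphi'+(q'-2)\psi$, and weight $c(t)=e^{-(q'-1)t}$. Then $\tilde\varphi+\tilde\psi=\varphi'+(q'-1)\psi$ is plurisubharmonic, conditions (1) and (2) of $\mathcal{P}_0$ hold since $\int_0^{+\infty}c(t)e^{-t}\,dt=1/q'$ and $c(t)e^{-t}=e^{-q't}$ is decreasing, and the weight collapses to $e^{-\tilde\varphi}c(-\tilde\psi)=e^{-\varphi'+\psi}$. Hence the associated function
\[
G(t)=\inf\Big\{\int_{\{\psi<-t\}}|\tilde f|^2e^{-\varphi'+\psi}:(\tilde f-F,o)\in\mathcal{I}(\varphi'+(q'-1)\psi)_o,\ \tilde f\in\mathcal{O}(\{\psi<-t\})\Big\}
\]
satisfies $G(0)=C_{F,\mathcal{I}(\varphi'+(q'-1)\psi)_o,\varphi'-\psi}(D)$. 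Setting $h(t)=\int_t^{+\infty}c(s)e^{-s}\,ds=e^{-q't}/q'$, the concavity of $r\mapsto G(h^{-1}(r))$ on $(0,1/q']$, together with $\lim_{t\to+\infty}G(t)=0$ (which follows from $\tilde f=F$ being a competitor and the integral over the shrinking set $\{\psi<-t\}$ tending to $0$), yields via the standard chord-from-origin comparison for concave functions vanishing at $0$ the bound $G(t)\ge \frac{h(t)}{h(0)}G(0)=e^{-q't}G(0)$ for every $t\ge 0$.

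To conclude, since $\tilde f=F$ is a valid competitor in the infimum defining $G(t)$, we have $\int_{\{\psi<-t\}}|F|^2e^{-\varphi'+\psi}\ge G(t)$, and assembling the two steps gives
\[
\int_D|F|^2e^{-\varphi'}\ \ge\ G(0)+\int_0^{+\infty}G(t)e^t\,dt\ \ge\ G(0)+G(0)\int_0^{+\infty}e^{-(q'-1)t}\,dt\ =\ \frac{q'}{q'-1}G(0),
\]
which is the stated inequality. The main technical obstacle is checking condition~(3) in the definition of $\mathcal{P}_0$: the product $e^{-\varphi'+\psi}$ has no positive lower bound on any compact set containing $o$, because $\psi(o)=-\infty$. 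I would handle this by approximation, e.g.\ replacing $c$ by $c_{\epsilon}(t)=\max(c(t),\epsilon)$ (or truncating $\psi$ from below), applying the concavity to the approximant, and then letting $\epsilon\downarrow 0$ via monotone convergence; both the concavity and the chord-from-origin inequality are preserved in the limit.
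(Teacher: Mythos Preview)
Your strategy coincides with the paper's: derive the level-set bound
\[
\int_{\{\psi<-t\}}|F|^2e^{-\varphi'+\psi}\ \ge\ e^{-q't}\,C_{F,\mathcal I(\varphi'+(q'-1)\psi)_o,\varphi'-\psi}(D)
\]
from the concavity Lemma~\ref{l:concave}, then integrate against $e^t\,dt$ via Lemma~\ref{l:1} to get the factor $\frac{q'}{q'-1}$. The difference is in \emph{how} the concavity is invoked, and this is where your argument has a real gap.

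You take $\tilde\psi=\psi$, $\tilde\varphi=\varphi'+(q'-2)\psi$, $c(t)=e^{-(q'-1)t}$, and correctly note that condition~(3) of $\mathcal P_0$ fails. However, your proposed repair $c_\epsilon(t)=\max(c(t),\epsilon)$ does \emph{not} restore~(3) when $1<q'<2$: in that range
\[
e^{-\tilde\varphi}=e^{-\varphi'}e^{(2-q')\psi}
\]
already tends to $0$ at $o$ (since $\psi(o)=-\infty$ and $2-q'>0$), and multiplying by the bounded function $c_\epsilon(-\psi)$ cannot make the product bounded below on any compact set containing $o$. The alternative of ``truncating $\psi$ from below'' is also problematic: replacing $\psi$ by $\max(\psi,-N)$ destroys either the hypothesis $\tilde\psi(o)=-\infty$ required in Lemma~\ref{l:concave} or the identification $\tilde\varphi+\tilde\psi=\varphi'+(q'-1)\psi$ that pins down the correct ideal. (There is also the minor issue that $c_\epsilon$ is not smooth, but that is easily mollified.)

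The paper circumvents this by a different choice: $\tilde\psi=q'\psi$, $\tilde\varphi=\varphi'-\psi$, and a family $c_l$ equal to $1$ on $(0,l)$ but growing like $e^{t/q'}$ beyond $l+1$. Then for $-q'\psi$ large one has $e^{-\tilde\varphi}c_l(-\tilde\psi)\approx e^{-\varphi'+\psi}\cdot e^{-\psi-l/q'}=e^{-l/q'}e^{-\varphi'}$, which \emph{is} bounded below on compacta (as $\varphi'$ is locally bounded above). After applying concavity with $c_l$ one lets $l\to\infty$; dominated convergence (using $\int_D|F|^2e^{-\varphi'}<\infty$ and $c_l(-q'\psi)\le e^{-\psi}$) recovers the level-set bound, and the remaining integration is exactly as you wrote. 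So the correct fix is not a naive clamping of $c$ but a choice of $(\tilde\psi,\tilde\varphi,c_l)$ in which the tail of $c_l$ grows precisely fast enough to cancel the decay of $e^{-\tilde\varphi}$.
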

\begin{proof}
	It suffices to consider the situation that $\int_{D}|F|^2e^{-\varphi'}<+\infty$. For any $l\in \mathbb{N}^+$, we can find an increasing smooth function $c_l(t)$ on $(0,+\infty)$, such that $c_l(t)=1$ when $t\in(0,l)$, $c_l(t)=e^{\frac{t-l}{q'}}$ when $t>l+1$, and $c(t)e^{-t}$ is decreasing with respect to $t$. Let $\tilde\varphi=\varphi'-\psi$, $\tilde\psi=q'\psi$, and it is clear that $c_l\in\mathcal P_0$. Then Lemma \ref{l:concave} implies that
	\begin{equation}
		\label{eq:20210529c}
		\begin{split}
		\int_{\{q'\psi<-q't\}}|F|^2e^{-\varphi'+\psi}c_l(-q'\psi)\geq&\int_{q't}^{+\infty}c_l(s)e^{-s}ds\frac{G(0,c_l)}{\int_0^{+\infty}c_l(s)e^{-s}ds}	\\
		\geq&\int_{q't}^{+\infty}c_l(s)e^{-s}ds\frac{C_{F,\mathcal{I}(\varphi'+(q'-1)\psi)_o,\varphi'-\psi}(D)}{\int_0^{+\infty}c_l(s)e^{-s}ds}	.
		\end{split}	\end{equation}
	As $\int_{D}|F|^2e^{-\varphi'}<+\infty$, let $l\rightarrow+\infty$, then it follows from the dominated convergence theorem that
	\begin{equation}
		\label{eq:20210529d}
		\int_{\{q'\psi<-q't\}}|F|^2e^{-\varphi'+\psi}\geq e^{-q't}C_{F,\mathcal{I}(\varphi'+(q'-1)\psi)_o,\varphi'-\psi}(D).
			\end{equation}
Combining Lemma \ref{l:1}, we have
\begin{equation}
	\label{eq:20210529e}
	\begin{split}
		\int_{D}|F|^2e^{-\varphi'}=&\int_{-\infty}^{+\infty}(\int_{\{\psi<-t\}}|F|^2e^{-\varphi'+\psi})e^{t}dt\\
		=&\int_{0}^{+\infty}(\int_{\{\psi<-t\}}|F|^2e^{-\varphi'+\psi})e^{t}dt+\int_{-\infty}^{0}(\int_{\{\psi<-t\}}|F|^2e^{-\varphi'+\psi})e^{t}dt\\
		\geq&(\int_{0}^{+\infty}e^{-q't+t}dt+\int_{-\infty}^{0}e^{t}dt)C_{F,\mathcal{I}(\varphi'+(q'-1)\psi)_o,\varphi'-\psi}(D)\\
		=&\frac{q'}{q'-1}C_{F,\mathcal{I}(\varphi'+(q'-1)\psi)_o,\varphi'-\psi}(D).
	\end{split}
\end{equation}
Thus, Proposition \ref{p} holds.
	\end{proof}

Let $F$ be a holomorphic function on pseudoconvex domain $D\subset\mathbb C^n$, which  contains the origin $o\in\mathbb C^n$. Let  $\psi$ be  a negative plurisubharmonic function on $D$ satisfying $\psi(o)=-\infty$, and let $\varphi'$ be a  plurisubharmonic function on $D$.

Define $c_o^{F}(\varphi';\psi):=\sup\{c\geq0:|F|^2e^{-\varphi'-(2c-1)\psi}$ is $L^1$ on a neighborhood of $o$$\}$. Especially, when $\psi=\varphi'$, $c_o^{F}(\varphi';\psi)$ degenerates to the jumping number $c_o^{F}(\varphi')$ (see \cite{JM13}).

\begin{Proposition}
\label{thm:effective}
	Assume that $\int_{D}|F|^2e^{-\varphi'}<+\infty$, and $c_o^{F}(\varphi';\psi)<+\infty$. Then for any $q'>1$ satisfying
	$$\frac{q'}{q'-1}>\frac{\int_{D}|F|^2e^{-\varphi'}}{C_{F,\mathcal{I}_+(\varphi'+(2c_o^{F}(\varphi';\psi)-1)\psi)_o,\varphi'-\psi}(D)},$$
	we have
	 $|F|^2e^{-\varphi'-(q'-1)\psi}$ is $L^1$ on a neighborhood of $o$.
\end{Proposition}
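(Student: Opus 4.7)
The plan is to apply Proposition \ref{p} with the given $q'>1$, combine the resulting estimate with the hypothesis of Proposition \ref{thm:effective} to produce a strict inequality between two values of the functional $C_{F,\cdot,\varphi'-\psi}(D)$, and then extract the conclusion via a short case split that uses the strong openness property.

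First, I would apply Proposition \ref{p} verbatim with the given $\varphi'$, $\psi$ and $q'>1$ to obtain
\begin{equation*}
C_{F,\mathcal{I}(\varphi'+(q'-1)\psi)_o,\varphi'-\psi}(D) \leq \frac{q'-1}{q'}\int_D |F|^2 e^{-\varphi'}.
\end{equation*}
Writing $c_1:=c_o^F(\varphi';\psi)$ for brevity, the hypothesis of the proposition rearranges to
\begin{equation*}
\frac{q'-1}{q'}\int_D |F|^2 e^{-\varphi'} < C_{F,\mathcal{I}_+(\varphi'+(2c_1-1)\psi)_o,\varphi'-\psi}(D),
\end{equation*}
so chaining these two yields the strict comparison
\begin{equation*}
C_{F,\mathcal{I}(\varphi'+(q'-1)\psi)_o,\varphi'-\psi}(D) < C_{F,\mathcal{I}_+(\varphi'+(2c_1-1)\psi)_o,\varphi'-\psi}(D).
\end{equation*}

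Next, I would split into two cases depending on whether $q'<2c_1$ or $q'\geq 2c_1$. If $q'<2c_1$, then $q'/2<c_1$, and the definition of the jumping number $c_1$ immediately gives that $|F|^2 e^{-\varphi'-(q'-1)\psi}$ is $L^1$ on a neighborhood of $o$, which is the desired conclusion. If instead $q'\geq 2c_1$, I expect a contradiction: since $\psi<0$ and $q'-1\geq 2c_1-1$, one has $\varphi'+(q'-1)\psi \leq \varphi'+(2c_1-1)\psi$ pointwise on $D$, hence $\mathcal{I}(\varphi'+(q'-1)\psi)_o\subset\mathcal{I}(\varphi'+(2c_1-1)\psi)_o$. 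The strong openness property, applied to the plurisubharmonic weight $\varphi'+(2c_1-1)\psi$, gives $\mathcal{I}(\varphi'+(2c_1-1)\psi)_o=\mathcal{I}_+(\varphi'+(2c_1-1)\psi)_o$, so $\mathcal{I}(\varphi'+(q'-1)\psi)_o\subset\mathcal{I}_+(\varphi'+(2c_1-1)\psi)_o$. Since $I\mapsto C_{F,I,\varphi'-\psi}(D)$ is order-reversing in $I$ (a smaller ideal imposes a stricter constraint on the admissible $\tilde F$), this containment forces $C_{F,\mathcal{I}(\varphi'+(q'-1)\psi)_o,\varphi'-\psi}(D)\geq C_{F,\mathcal{I}_+(\varphi'+(2c_1-1)\psi)_o,\varphi'-\psi}(D)$, contradicting the strict inequality above.

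The crucial input is strong openness, which is what lets us identify $\mathcal{I}_+$ with $\mathcal{I}$ at the jumping-number weight; once that is in hand, the rest is essentially bookkeeping. The step I expect to need the most care is the case $q'\geq 2c_1$: I should verify that $\varphi'+(2c_1-1)\psi$ is a genuine plurisubharmonic weight, i.e.\ that $2c_1-1\geq 0$, which follows by inserting $c=1/2$ into the definition of $c_o^F(\varphi';\psi)$ and using the assumption $\int_D|F|^2 e^{-\varphi'}<+\infty$.
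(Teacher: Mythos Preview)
Your argument is correct and follows essentially the same route as the paper: apply Proposition~\ref{p}, compare with the hypothesis, and conclude by contrapositive that $q'<2c_o^{F}(\varphi';\psi)$. The only notable differences are presentational: the paper treats $q'>2c_1$ first and obtains the boundary case $q'=2c_1$ by letting $q'\downarrow 2c_1$, whereas you handle $q'\ge 2c_1$ in one stroke via strong openness; and you make explicit the appeal to strong openness (to identify $\mathcal{I}(\varphi'+(2c_1-1)\psi)_o$ with $\mathcal{I}_+(\varphi'+(2c_1-1)\psi)_o$) as well as the check $2c_1-1\ge 0$, both of which the paper leaves implicit.
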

\begin{proof}
We give the proof of Proposition \ref{thm:effective} by using Propersition \ref{p}.

If $q'>2c_o^{F}(\varphi';\psi)$, then $C_{F,\mathcal{I}(\varphi'+(q'-1)\psi)_o,\varphi'-\psi}(D)\geq C_{F,\mathcal{I}_+(\varphi'+(2c_o^{F}(\varphi';\psi)-1)\psi)_o,\varphi'-\psi}(D)>0$. Combining inequality \eqref{eq:20210529a}, we have
\begin{equation}
	\label{eq:20210529f}
	\int_{D}|F|^2e^{-\varphi'}\geq\frac{q'}{q'-1}C_{F,\mathcal{I}_+(\varphi'+(2c_o^{F}(\varphi';\psi)-1)\psi)_o,\varphi'-\psi}(D).\end{equation}
Let $q'\rightarrow 2c_o^{F}(\varphi';\psi)$, we have inequality \eqref{eq:20210529f} also holds when $q'=2c_o^{F}(\varphi';\psi)$. Then we obtain that, if
$$\int_{D}|F|^2e^{-\varphi'}<\frac{q'}{q'-1}C_{F,\mathcal{I}_+(\varphi'+(2c_o^{F}(\varphi';\psi)-1)\psi)_o,\varphi'-\psi}(D),$$
then $q'<2c_o^{F}(\varphi';\psi)$,  $|F|^2e^{-\varphi'-(q'-1)\psi}$ is $L^1$ on a neighborhood of $o$.

Thus Proposition \ref{thm:effective} holds.	
\end{proof}

\begin{proof}[Proof of Theorem \ref{c:lp}]
$c_{o,p}^{F}(\varphi)<+\infty$ shows that $\varphi(o)=-\infty$.	We use Proposition \ref{thm:effective} to prove Theorem \ref{c:lp}.
	Replace $F$, $\psi$ and $\varphi'$ by $F_1$, $a\varphi$ and $\varphi_1+\varphi$, respectively, where $a>0$. It is clear that  $\mathcal{I}_+(\varphi_1+\varphi+(2c_o^{F_1}(\varphi_1+\varphi;a\varphi)-1)a\varphi)_o=\mathcal{I}_+(\varphi_1+2c_{o,p}^{F}(\varphi)\varphi)_o$. Note that
	$$C_{F_1,\mathcal{I}_+(\varphi_1+2c_{o,p}^{F}(\varphi)\varphi)_o,\varphi_1+(1-a)\varphi}(D)=\frac{1}{K_{\varphi,F,a}^{(p)}(o)}.$$
	  Let $q=(q'-1)a+1$, then it follows  from  Proposition \ref{thm:effective} that Theorem \ref{c:lp} holds.
\end{proof}

\section{Appendix}\label{Appe}
In this section, we prove that $C_{F,I,\Phi_1-\Phi_2}(D)=0$ if and only if $(F,o)\in I$. As $C_{F_1,\mathcal{I}_+(\varphi_1+2c_{o,p}^{F}(\varphi)\varphi)_o,\varphi_1+(1-a)\varphi}(D)=\frac{1}{K_{\varphi,F,a}^{(p)}(o)}$ and $F_1\not\in\mathcal{I}_+(\varphi_1+2c_{o,p}^{F}(\varphi)\varphi)_o$, therefore we get $K_{\varphi,F,a}^{(p)}(o)<+\infty$. Similarly, we have $K_{D,(1-a)\varphi}(o)<+\infty$.

It is clear that $(F,o)\in I\Rightarrow C_{F,I,\Phi_1-\Phi_2}(D)=0$. Thus, it suffices to prove that $C_{F,I,\Phi_1-\Phi_2}(D)=0\Rightarrow(F,o)\in I$.

By definition of $C_{F,I,\Phi_1-\Phi_2}(D)$, there exist holomorphic functions $\{F_j\}_{j\in \mathbb{N}^+}$ on $D$ such that $\lim_{j\rightarrow+\infty}\int_{D}|F_j|^2e^{-\Phi_1+\Phi_2}=0$ and $(F_j-F,o)\in I$. Fixed open subsets $D'$ and $D''$ of $D$ satisfying $D''\subset\subset D'\subset\subset D$. As $\Phi_1$ is plurisubharmonic on $D$, there exists a constant $C_1>0$ such that $\int_{D'}|F_j|^2e^{\Phi_2}\leq C_1\int_{D}|F_j|^2e^{-\Phi_1+\Phi_2}$ for any $j\in \mathbb{N}^+$, therefore
\begin{equation}
	\label{eq:210703a}\lim_{j\rightarrow+\infty}\int_{D'}|F_j|^2e^{\Phi_2}=0.\end{equation}

Since $\Phi_2$ is plurisubharmonic on $D$ and $\Phi_2\not\equiv-\infty$, there exists $s_0>0$ such that $\int_{D'}e^{-s\Phi_2}<+\infty$ for any $s\in[0,s_0)$.

Take a positive number $r$ satisfying $\frac{r}{1-r}\in(0,s_0)$, then $\int_{D'}e^{-\frac{r}{1-r}\Phi_2}<+\infty$. Following from  $|F_j|^{2r}$ is plurisubharmonic on $D$, we obtain that
\begin{displaymath}
	\begin{split}
		|F_j(z)|^{2r}&\leq C_2\int_{D'}|F_j|^{2r}\\
		&\leq C_2 (\int_{D'}|F_j|^{2}e^{\Phi_2})^{r}(\int_{D'}e^{-\frac{r}{1-r}\Phi_2})^{1-r}
	\end{split}
\end{displaymath}holds for any $z\in D''$ and $j\in \mathbb{N}^+$, where $C_2$ is a constant independent of $j$ and $z$. Then there exists a constant $C_3$ such that
\begin{equation}
	\label{eq:210703b}|F_j(z)|^{2}\leq C_3\int_{D'}|F_j|^{2}e^{\Phi_2}
\end{equation}
holds for any $z\in D''$ and $j\in \mathbb{N}^+$.
   Combining equality \eqref{eq:210703a} and inequality \eqref{eq:210703b}, we have  $\{F_j\}_{j\in \mathbb{N}^+}$ is uniformly convergent to $0$ on any compact subset of $D$. Hence, $I$ is closed under local uniform convergence (see \cite{G-R}) and $(F_j-F,o)\in I$ imply that $(F,o)\in I$.

\vspace{.1in} {\em Acknowledgements}. The authors would like to thank Shijie Bao for pointing out some typos, and giving a useful suggestion about the layout of this paper. The first named author was supported by NSFC-11825101, NSFC-11522101 and NSFC-11431013.

\bibliographystyle{references}
\bibliography{xbib}

\end{document}